\newtheorem{theorem}{Theorem}
\theoremstyle{plain}
\newtheorem{definition}{Definition}
\newtheorem{remark}{Remark}
\numberwithin{equation}{section}
\begin{document}
\title[]{Generalizations of Banach and Kannan Fixed point theorems in $%
b_{v}\left( s\right) $ metric spaces}
\author{Ibrahim Karahan}
\address{Department of Mathematics, Faculty of Science, Erzurum Technical
University, Erzurum, 25700, Turkey.}
\email{ibrahimkarahan@erzurum.edu.tr}
\subjclass[2000]{ 47H10; 55M20}
\keywords{Kannan fixed point theorem, Banach fixed point theorem, weakly
contractive mapping, $b_{v}\left( s\right) $ metric space}

\begin{abstract}
Generalizations of a metric space is one of the most important research
areas in mathematics. In literature ,there are several generalized metric
spaces. The latest generalized metric space is $b_{v}\left( s\right) $
metric space which is introduced by Mitrovic and Radenovic in 2017. In this
paper, we prove Kannan fixed point theorem and generalize Banach fixed point
theorem for weakly contractive mappings in $b_{v}\left( s\right) $ metric
spaces. Our results extend and generalize some corresponding result.
\end{abstract}

\maketitle

\section{Introduction and Preliminaries}

Let $\left( E,\rho \right) $ be a metric space and $S$ a mapping on $E$. If
there exists a point $u\in E$ such that $Su=u$, then point $u$ is said to be
a fixed point of $S$ and the set of all fixed points of $S$ is denoted by $%
F(S)$. Fixed point theory is one of the most important and famous theory in
mathematics since it has applications to very different type of problems
arise in different branches. So, uniqueness and existence problems of fixed
points are also important. Two of the well known fixed point theorems are
Banach and Kannan fixed point theorems. Banach fixed point theorem proved by
Stefan Banach in 1920 guarantees that a contractive mapping (a mapping $S$
is called contractive if there exists a $c\in \left[ 0,1\right) $ such that $%
\rho \left( Su,Sw\right) \leq c\rho \left( u,w\right) $ for all $u,w\in E)$
defined on a complete metric space has a unique fixed point, (see \cite{ban}%
). In 1968, Kannan proved another fixed point theorem for mapping satisfying 
\begin{equation}
\rho \left( Su,Sw\right) \leq \gamma \left( \rho \left( u,Su\right) +\rho
\left( w,Sw\right) \right)  \label{i1}
\end{equation}%
for all $u,w\in E$ and $\gamma \in \left[ 0,\frac{1}{2}\right) ,$ (see \cite%
{kan}). Although contractivity condition implies the uniform continuity of $%
S $, Kannan type mappings (mappings which satisfy the inequality (\ref{i1}))
need not to be continious. Also both of these theorems characterize the
completeness of metric spaces. Because of the importance of these theorems
many authors generalized them for different type of contractions (see \cite%
{shu, zey, suz, ens, mih, ak}). In one of these studies, Rhoades \cite{rho}
proved that a mapping which satisfies for all $u,w\in E$ 
\begin{equation}
\rho \left( Su,Sw\right) \leq \rho \left( u,w\right) -\varphi \left( \rho
\left( u,w\right) \right)  \label{i2}
\end{equation}
has a unique fixed point where $\left( E,\rho \right) $ is a complete metric
space and $\varphi :%
\mathbb{R}
^{+}\cup \left\{ 0\right\} \rightarrow 
\mathbb{R}
^{+}\cup \left\{ 0\right\} $ is a nondecreasing and continuous function such
that $\varphi (t)=0$ iff $t=0$. Here, $%
\mathbb{R}
$ is the set of all real numbers. A mapping which satisfies inequality (\ref%
{i2}) is called weakly contractive. It is clear that weakly contractive
mappings can be reduced to contractive mappings by taking $\varphi (t)=ct$
for $c\in \left[ 0,1\right) $.

It is well known that metric spaces are very important tool for all branches
of mathematics. So mathematicians have been tried to generalize this space
and transform their studies to more generalized metric spaces.

As one of the most famous generalized metric spaces, in 1989, $b$-metric
spaces was introduced by the following way.

\begin{definition}
\cite{Bakhtin} Let $E$ be a nonempty set and mapping $\rho :E\times
E\rightarrow \lbrack 0,\infty )$ a function. $\left( E,\rho \right) $ is
called $b$-metric space if there exists a real number $s\geq 1$ such that
following conditions hold for all $u,w,z\in E$:

\begin{enumerate}
\item[(1)] $\rho (u,w)=0$ iff $u=w$;

\item[(2)] $\rho (u,w)=\rho (w,u)$;

\item[(3)] $\rho (u,w)\leq s[\rho (u,z)+\rho (z,w)].$
\end{enumerate}
\end{definition}

Clearly a $b$-metric space for $s=1$ is exactly a metric space. After this
definition, many authors proved fixed point theorems for different type
mappings in this space (see \cite{AAR, A, ge, Na, hus, ros, aydi, kir}).
Also they gave some generalization of Banach contraction principle, Reich
and Kannan fixed point theorems. In this sense, Czerwik \cite{Cze, Cz}
generalized Banach contraction principle to $b$-metric spaces. Following $b$%
-metric spaces, some generalized version of this space such as extended $b$%
-metric space, dislocated $b$-metric space, recrangular $b$-metric space,
partial $b$-metric space, partial ordered $b$-metric space, etc. were
introduced. The latest generalized $b$-metric space was introduced by
Mitrovic and Radenovic \cite{mr} in 2017 by the following way.

\begin{definition}
\cite{mr} Let $E$ be a set, $\rho :$ $E\times E\rightarrow \lbrack 0,\infty
) $ a function and $v\in 
\mathbb{N}
$. Then $(E,\rho )$ is said to be a $b_{v}(s)$ metric space if there exists
a real number $s\geq 1$ such that following conditions hold for all $u,w\in
E $ and for all distinct points $z_{1},z_{2},...,z_{v}\in E$, each of them
different from $u$ and $w$:

\begin{enumerate}
\item[(1)] $\rho (u,w)=0$ iff $u=w$;

\item[(2)] $\rho (u,w)=\rho (w,u)$;

\item[(3)] $\rho (u,w)\leq s[\rho (u,z_{1})+\rho (z_{1},z_{2})+\cdots +\rho
\left( z_{v},w\right) ].$
\end{enumerate}
\end{definition}

$b_{v}\left( s\right) $ metric space generalizes not only $b$-metric space
but also rectangular metric space, $v$-generalized metric space and
recrangular $b$-metric space. Also, Mitrovic and Radenovic proved Banach
contraction principle and Reich fixed point theorem in this space.

Motivated and ispired by the above studies, in this paper, we generalize
Banach fixed point theorem for weakly contractive mappings and prove Kannan
fixed point theorem in $b_{v}\left( s\right) $ metric spaces.

\section{Main Results}

In this section we give generalizations of Banach and Kannan fixed point
theorem in $b_{v}\left( s\right) $ metric space.

\begin{theorem}
\label{A}Let $E$ be a complete $b_{v}\left( s\right) $ metric space and $S$
a weakly contractive mapping on $E$. Then $S$ has a unique fixed point.
\end{theorem}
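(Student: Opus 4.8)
The plan is to follow the classical Picard-iteration scheme, adapted to the polygonal inequality of a $b_{v}(s)$ metric space. Fix $u_{0}\in E$ and set $u_{n+1}=Su_{n}$. Writing $d_{n}=\rho(u_{n},u_{n+1})$, the weak contractivity condition (\ref{i2}) gives $d_{n}\leq d_{n-1}-\varphi(d_{n-1})\leq d_{n-1}$, so $\{d_{n}\}$ is nonincreasing and bounded below; letting $L=\lim d_{n}$ and passing to the limit using the continuity of $\varphi$ yields $L\leq L-\varphi(L)$, hence $\varphi(L)=0$ and $L=0$. If $d_{n}=0$ for some $n$ then $u_{n}$ is already a fixed point, so I may assume $d_{n}>0$ for all $n$. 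A short argument then shows the iterates are pairwise distinct: if $u_{n}=u_{m}$ with $n<m$, then $u_{n+1}=u_{m+1}$ and so $d_{n}=d_{m}$, contradicting the strict decrease $d_{m}<d_{n}$ that follows from $\varphi(d_{k})>0$. Distinctness is needed precisely so that the intermediate points in condition (3) of the definition are legitimate.

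The heart of the proof is showing that $\{u_{n}\}$ is Cauchy, and this is where I expect the main obstacle to lie. Because $\varphi$ is only assumed nondecreasing and continuous with $\varphi(t)=0$ iff $t=0$, the gaps $d_{n}$ need not decay geometrically (they can fail to be summable), so the usual estimate $\sum d_{n}<\infty$ is unavailable and one cannot simply bound $\rho(u_{n},u_{m})$ by a convergent tail. Instead I would argue by contradiction: if $\{u_{n}\}$ is not Cauchy, extract $\varepsilon>0$ and indices $m_{k}>n_{k}\geq k$ with $\rho(u_{n_{k}},u_{m_{k}})\geq\varepsilon$ and $m_{k}$ minimal with this property. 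Applying (\ref{i2}) gives $\rho(u_{n_{k}+1},u_{m_{k}+1})\leq\rho(u_{n_{k}},u_{m_{k}})-\varphi(\varepsilon)$, and the polygonal inequality (inserting exactly $v$ distinct iterates as the intermediate points $z_{1},\dots,z_{v}$) relates $\rho(u_{n_{k}},u_{m_{k}})$ back to $\rho(u_{n_{k}+1},u_{m_{k}+1})$ together with finitely many gaps $d_{j}\to 0$. The delicate points are that the inequality forces exactly $v$ intermediate points (so the padding iterates must be chosen distinct and their contribution shown to vanish), and that the constant $s\geq 1$ multiplies the whole bracket; reconciling the factor $s$ with the gain $\varphi(\varepsilon)$ is the crux, and I would handle it by taking $\liminf$/$\limsup$ in $k$ so that the vanishing gaps drop out and the surviving inequality contradicts $\varphi(\varepsilon)>0$.

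Granting that $\{u_{n}\}$ is Cauchy, completeness supplies a point $u$ with $\rho(u_{n},u)\to 0$. To see $Su=u$, I would estimate $\rho(u,Su)$ by the polygonal inequality through $v$ consecutive iterates, $\rho(u,Su)\leq s[\rho(u,u_{n+1})+(\text{gaps})+\rho(Su_{n},Su)]$, where $\rho(Su_{n},Su)\leq\rho(u_{n},u)-\varphi(\rho(u_{n},u))\leq\rho(u_{n},u)\to 0$ and $\rho(u,u_{n+1})\to 0$; letting $n\to\infty$ forces $\rho(u,Su)=0$, i.e.\ $Su=u$. Some care is required here because a $b_{v}(s)$ metric need not be continuous and limits need not be unique, so the intermediate iterates must again be kept distinct from $u$ and $Su$ along a suitable subsequence. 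Finally, uniqueness is immediate: if $Su=u$ and $Sw=w$, then (\ref{i2}) gives $\rho(u,w)=\rho(Su,Sw)\leq\rho(u,w)-\varphi(\rho(u,w))$, whence $\varphi(\rho(u,w))=0$ and therefore $\rho(u,w)=0$, i.e.\ $u=w$.
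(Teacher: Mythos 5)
Your opening and closing steps track the paper's proof closely and are fine: the decay of $d_{n}=\rho (u_{n},u_{n+1})$, the pairwise distinctness of the iterates, the estimate of $\rho (u,Su)$ through the intermediate points $u_{n+1},\dots ,u_{n+v}$ with $\rho (u_{n+v},Su)=\rho (Su_{n+v-1},Su)$ handled by weak contractivity, and the uniqueness argument all appear in the paper in essentially the same form. The genuine gap is exactly where you predicted it, and your proposed repair does not close it. Run your own plan, say for $v=2$: the polygon inequality with intermediate points $u_{n_{k}+1},u_{m_{k}+1}$ gives $r_{k}\leq s\left[ d_{n_{k}}+\rho (u_{n_{k}+1},u_{m_{k}+1})+d_{m_{k}}\right] $ where $r_{k}=\rho (u_{n_{k}},u_{m_{k}})$, and the contraction gives $\rho (u_{n_{k}+1},u_{m_{k}+1})\leq r_{k}-\varphi (\varepsilon )$; combining, $s\varphi (\varepsilon )\leq s(d_{n_{k}}+d_{m_{k}})+(s-1)r_{k}$. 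Minimality of $m_{k}$ yields at best $\limsup_{k}r_{k}\leq s\varepsilon $ (the polygon inequality inserts another factor $s$ there), so after taking $\limsup $ the surviving inequality is $\varphi (\varepsilon )\leq (s-1)\varepsilon $. This contradicts $\varphi (\varepsilon )>0$ only when $s=1$; for $s>1$ it is a harmless constraint (e.g.\ $\varphi (t)=t/2$, $s=2$), so the $\liminf $/$\limsup $ device does not reconcile the factor $s$ with the gain $\varphi (\varepsilon )$, and the Rhoades-type non-Cauchy argument genuinely breaks in $b_{v}(s)$ spaces. As written, your proof establishes the theorem only for $s=1$.

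The idea you are missing --- and the paper's actual route --- is that weak contractivity (\ref{i2}) applies to \emph{every} pair of points, not only to consecutive iterates, so the polygon inequality (and hence the constant $s$) can be avoided entirely in the Cauchy step. Fix $p$ and set $\alpha _{n}=\rho (u_{n},u_{n+p})$; applying (\ref{i2}) to the pair $(u_{n},u_{n+p})$ gives $\alpha _{n+1}\leq \alpha _{n}-\varphi (\alpha _{n})\leq \alpha _{n}$, so $\alpha _{n}$ decreases to some $\alpha \geq 0$, and if $\alpha >0$ then $\varphi (\alpha _{n})\geq \varphi (\alpha )>0$ forces $\alpha _{N+m}\leq \alpha _{m}-N\varphi (\alpha )<0$ for large $N$, a contradiction; hence $\rho (u_{n},u_{n+p})\rightarrow 0$ for each fixed $p$, with $s$ never entering. (One can object that the paper then passes from this fixed-$p$ decay to full Cauchyness without justification --- the rate depends on $p$ through $\alpha _{0}=\rho (u_{0},u_{p})$, and decay for each fixed $p$ is strictly weaker than the Cauchy condition --- but that is the paper's chosen route, and it at least sidesteps the $s$-obstruction on which your argument founders.)
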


\begin{proof}
Let $u_{0}\in E$ be an arbitrary initial point. Define sequence $\left\{
u_{n}\right\} $ by $u_{n+1}=Su_{n}$. If $u_{n}=u_{n+1}$, then it is clear
that $u_{n}$ is a fixed point of $S$. So, let us assume that $u_{n}\neq
u_{n+1}$ for all $n$. Moreover, for all different $n$ and $m$, assumption $%
x_{n}\neq x_{m}$ can be proved in a similar way to other studies. Then, from
definition of weakly contractive mapping we have%
\begin{eqnarray*}
\rho \left( u_{n+1},u_{n+p+1}\right) &=&\rho \left( Su_{n},Su_{n+p}\right) \\
&\leq &\rho \left( u_{n},u_{n+p}\right) -\varphi \left( \rho \left(
u_{n},u_{n+p}\right) \right)
\end{eqnarray*}%
for all $n,p\in 
\mathbb{N}
$ where $%
\mathbb{N}
$ is the set of pozitive integer. Set $\alpha _{n}=\rho \left(
u_{n},u_{n+p}\right) $. Since $\alpha _{n}$ is nonnegative and $\varphi $ is
nondecreasing function, we can write%
\begin{equation}
\alpha _{n+1}\leq \alpha _{n}-\varphi \left( \alpha _{n}\right) \leq \alpha
_{n}.  \label{m1}
\end{equation}%
This means that $\left\{ \alpha _{n}\right\} $ is a nonincreasing sequence.
Also, we know that $\left\{ \alpha _{n}\right\} $ is bounded below. Hence it
has a limit $\alpha \geq 0$.\ Assume that $\alpha >0$. Since $\varphi $ is
nondecreasing function we have%
\begin{equation*}
\varphi \left( \alpha _{n}\right) \geq \varphi \left( \alpha \right) >0.
\end{equation*}%
Therefore, from (\ref{m1}) we obtain%
\begin{equation*}
\alpha _{n+1}\leq \alpha _{n}-\varphi \left( \alpha \right) .
\end{equation*}%
Thus, we get $\alpha _{N+m}\leq \alpha _{m}-N\varphi \left( \alpha \right) $
which is a contradiction for large enough $N$. So, our assumption is wrong,
that is $\alpha =0$. Since $\alpha _{n}=\rho \left( u_{n},u_{n+p}\right)
\rightarrow 0$ as $n\rightarrow \infty $, $\left\{ u_{n}\right\} $ is a
Cauchy sequence in $E$. Hence, $\left\{ u_{n}\right\} $ converges to a point 
$u$ in $E$ because of the completeness of $E$. Now, we show that $\rho
\left( u,Su\right) =0$, i.e., $u$ is a fixed point of $S$. Using weak
contractivity of $S$ and definition of $b_{v}\left( s\right) $ metric $\rho $%
, we have%
\begin{eqnarray*}
\rho \left( u,Su\right) &\leq &s\left[ \rho \left( u,u_{n+1}\right) +\rho
\left( u_{n+1},u_{n+2}\right) +\ldots +\rho \left( u_{n+v-1},u_{n+v}\right)
+\rho \left( u_{n+v},Su\right) \right] \\
&=&s\left[ \rho \left( u,u_{n+1}\right) +\rho \left( u_{n+1},u_{n+2}\right)
+\ldots +\rho \left( u_{n+v-1},u_{n+v}\right) +\rho \left(
Su_{n+v-1},Su\right) \right] \\
&\leq &s\left[ \rho \left( u,u_{n+1}\right) +\rho \left(
u_{n+1},u_{n+2}\right) +\ldots +\rho \left( u_{n+v-1},u_{n+v}\right) \right.
\\
&&\left. +\rho \left( u_{n+v-1},u\right) -\varphi \left( \rho \left(
u_{n+v-1},u\right) \right) \right] .
\end{eqnarray*}%
Since $\rho \left( u_{n},u_{n+p}\right) \rightarrow 0$ and $u_{n}\rightarrow
u$ as $n\rightarrow \infty $ and $\varphi \left( 0\right) =0$, we obtain $%
\rho \left( u,Su\right) =0$, i.e., $u$ is a fixed point of $S$.

Now, we need to show that $u$ is a unique fixed point. Suppose to the
contrary that there exist a different fixed point $w$. Since $S$ is a weakly
contractive mapping, we have%
\begin{equation*}
\rho \left( u,w\right) =\rho \left( Su,Sw\right) \leq \rho \left( u,w\right)
-\varphi \left( \rho \left( u,w\right) \right) <\rho \left( u,w\right) .
\end{equation*}%
This is a contradiction. So $u=w$, that is $u$ is a unique fixed point of $S$%
. This completes the proof.
\end{proof}

\begin{remark}
In Theorem \ref{A},

\begin{enumerate}
\item if we take $v=s=1$ and $\varphi (t)=ct$, then we derive Banach fixed
point theorem.

\item if we take $\varphi (t)=ct$ then we derive Theorem 2.1 of \cite{mr}

\item if $v=1$ and $\varphi (t)=ct$, then we derive Theorem 2.1 of \cite{dun}%
.

\item if $v=2$ and $\varphi (t)=ct$, then we derive Theorem 2.1 of \cite{mit}
and so main theorem of \cite{geo}.

\item \i f $v=s=1$, then we derive main theorem of \cite{rho}.
\end{enumerate}
\end{remark}

Now, we prove Kannan fixed point theorem in $b_{v}\left( s\right) $ metric
spaces.

\begin{theorem}
\label{B}Let $E$ be a complete $b_{v}\left( s\right) $ metric space and $S$
a Kannan type mapping on $E$ such that $s\gamma \leq 1$. Then $S$ has a
unique fixed point.
\end{theorem}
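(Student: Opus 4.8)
The plan is to follow the template of Theorem \ref{A}: build a Picard sequence, show it is Cauchy, pass to its limit, and verify that the limit is fixed, isolating the one place where the constant $s\gamma$ enters. Fix $u_0 \in E$, set $u_{n+1} = Su_n$, and (exactly as in Theorem \ref{A}) assume $u_n \neq u_{n+1}$ and that the $u_n$ are pairwise distinct, since otherwise a fixed point is exhibited outright. The first step is purely algebraic and does not touch condition (3): applying the Kannan inequality to $\rho(u_n, u_{n+1}) = \rho(Su_{n-1}, Su_n)$ gives $\rho(u_n, u_{n+1}) \leq \gamma(\rho(u_{n-1}, u_n) + \rho(u_n, u_{n+1}))$, and rearranging yields $\rho(u_n, u_{n+1}) \leq k\,\rho(u_{n-1}, u_n)$ with $k = \gamma/(1-\gamma)$. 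Since $\gamma < 1/2$ forces $k < 1$, iteration gives the geometric estimate $\rho(u_n, u_{n+1}) \leq k^n \rho(u_0, u_1)$.

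The Cauchy property is, pleasantly, immediate for Kannan maps and again avoids condition (3): for $m > n \geq 1$ the Kannan inequality applied to $\rho(u_n, u_m) = \rho(Su_{n-1}, Su_{m-1})$ bounds this distance by $\gamma(\rho(u_{n-1}, u_n) + \rho(u_{m-1}, u_m)) \leq \gamma(k^{n-1} + k^{m-1})\rho(u_0, u_1)$, which tends to $0$ as $n, m \to \infty$. Hence $\{u_n\}$ is Cauchy and, by completeness of $E$, converges to some $u \in E$.

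The sole use of the $b_v(s)$ structure is in proving $\rho(u, Su) = 0$. Here I would feed the points $u_{n+1}, \ldots, u_{n+v}$ into condition (3) to obtain $\rho(u, Su) \leq s[\rho(u, u_{n+1}) + \rho(u_{n+1}, u_{n+2}) + \cdots + \rho(u_{n+v-1}, u_{n+v}) + \rho(u_{n+v}, Su)]$ and then estimate the final term by Kannan, $\rho(u_{n+v}, Su) = \rho(Su_{n+v-1}, Su) \leq \gamma(\rho(u_{n+v-1}, u_{n+v}) + \rho(u, Su))$. After distributing $s$, every resulting term except $s\gamma\,\rho(u, Su)$ is either $\rho(u, u_{n+1})$ (vanishing because $u_n \to u$) or a consecutive distance bounded by $k^{n+i}\rho(u_0, u_1) \to 0$. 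Collecting the self-referential term on the left leaves $(1 - s\gamma)\rho(u, Su) \leq s\,a_n$, where $a_n$ is the sum of the remaining vanishing terms, whence $\rho(u, Su) = 0$, i.e. $Su = u$.

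Uniqueness is effortless for Kannan maps: if $u$ and $w$ are both fixed, then $\rho(u, w) = \rho(Su, Sw) \leq \gamma(\rho(u, Su) + \rho(w, Sw)) = 0$, so $u = w$. The step I expect to be the main obstacle is the rearrangement in the fixed-point verification: dividing by $1 - s\gamma$ is only legitimate when $s\gamma < 1$, so the boundary case $s\gamma = 1$ (which $\gamma < 1/2$ does permit, via $s = 1/\gamma > 2$) is delicate and would require either sharpening the hypothesis to $s\gamma < 1$ or supplying a separate limiting argument. The remaining technical point, dispatched exactly as in Theorem \ref{A}, is the standing distinctness assumption on the $u_n$, which is precisely what makes condition (3) applicable with $u_{n+1}, \ldots, u_{n+v}$ as admissible intermediate points, distinct from one another and from the endpoints $u$ and $Su$.
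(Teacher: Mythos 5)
Your proposal follows the paper's proof essentially step for step: the same geometric estimate with ratio $\gamma/(1-\gamma)$, the same direct Kannan bound on $\rho(u_n,u_m)$ giving the Cauchy property, the same use of condition (3) with intermediate points $u_{n+1},\ldots,u_{n+v}$ leading to $\left(1-s\gamma\right)\rho\left(u,Su\right)\leq s\left[\cdots\right]$, and the same uniqueness argument. Your closing worry about the boundary case $s\gamma=1$ is well taken: the paper's own proof derives exactly this inequality and then asserts $\rho\left(u,Su\right)=0$, which is vacuous when $1-s\gamma=0$, so under the stated hypothesis $s\gamma\leq 1$ the paper contains precisely the gap you identified, and your suggestion to strengthen the hypothesis to $s\gamma<1$ (or supply a separate argument) applies to the published proof as well.
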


\begin{proof}
Let $u_{0}\in E$ be an arbitrary initial point. Define sequence $\left\{
u_{n}\right\} $ by $u_{n+1}=Su_{n}$. If $u_{n}=u_{n+1}$, then it is clear
that $u_{n}$ is a fixed point of $S$. So, let us assume that $u_{n}\neq
u_{n+1}$ for all $n$. Moreover, we can assume that $x_{n}\neq x_{m}$ for all
different $n$ and $m$ in a similar way to \cite{geo}. Since $S$ is a Kannan
type mapping we have%
\begin{eqnarray*}
\rho \left( u_{n},u_{n+1}\right) &=&\rho \left( Su_{n-1},Su_{n}\right) \\
&\leq &\gamma \left[ \rho \left( u_{n-1},Su_{n-1}\right) +\rho \left(
u_{n},Su_{n}\right) \right] \\
&=&\gamma \left[ \rho \left( u_{n-1},Su_{n-1}\right) +\rho \left(
u_{n},u_{n+1}\right) \right]
\end{eqnarray*}%
and so,%
\begin{equation}
\rho \left( u_{n},u_{n+1}\right) \leq \frac{\gamma }{1-\gamma }\rho \left(
u_{n-1},Su_{n-1}\right) \leq \left( \frac{\gamma }{1-\gamma }\right)
^{n}\rho \left( u_{0},Su_{0}\right) .  \label{m2}
\end{equation}%
This implies that%
\begin{equation}
\lim_{n\rightarrow \infty }\rho \left( u_{n},u_{n+1}\right) =0.  \label{m3}
\end{equation}%
Using inequality (\ref{m2}), we get%
\begin{eqnarray*}
\rho \left( u_{n},u_{n+p}\right) &=&\rho \left(
S^{n}u_{0},S^{n+p}u_{0}\right) \\
&\leq &\gamma \left[ \rho \left( S^{n-1}u_{0},S^{n}u_{0}\right) +\rho \left(
S^{n+p-1}u_{0},S^{n+p}u_{0}\right) \right] \\
&\leq &\gamma \left[ \left( \frac{\gamma }{1-\gamma }\right) ^{n-1}\rho
\left( u_{0},Su_{0}\right) +\left( \frac{\gamma }{1-\gamma }\right)
^{n+p-1}\rho \left( u_{0},Su_{0}\right) \right] .
\end{eqnarray*}%
Since $\gamma \in \left[ 0,\frac{1}{2}\right) $, $\rho \left(
u_{n},u_{n+p}\right) \rightarrow 0$ as $n\rightarrow \infty $. It means that 
$\left\{ u_{n}\right\} $ is a Cauchy sequence in $E$. By completeness of $E$%
, there exists a point $u$ in $E$ such that $u_{n}\rightarrow u$. Now we
show that $u$ is a fixed point of $S$. For any $n\in 
\mathbb{N}
$, we have%
\begin{eqnarray*}
\rho \left( u,Su\right) &\leq &s\left[ \rho \left( u,u_{n+1}\right) +\rho
\left( u_{n+1},u_{n+2}\right) +\ldots +\rho \left( u_{n+v-1},u_{n+v}\right)
+\rho \left( u_{n+v},Su\right) \right] \\
&=&s\left[ \rho \left( u,u_{n+1}\right) +\rho \left( u_{n+1},u_{n+2}\right)
+\ldots +\rho \left( u_{n+v-1},u_{n+v}\right) +\rho \left(
Su_{n+v-1},Su\right) \right] \\
&\leq &s\left[ \rho \left( u,u_{n+1}\right) +\rho \left(
u_{n+1},u_{n+2}\right) +\ldots +\rho \left( u_{n+v-1},u_{n+v}\right) \right.
\\
&&\left. +\gamma \left[ \rho \left( u_{n+v-1},Su_{n+v-1}\right) +\rho \left(
u,Su\right) \right] \right] ,
\end{eqnarray*}%
and so%
\begin{equation*}
\left( 1-s\gamma \right) \rho \left( u,Su\right) \leq s\left[ \rho \left(
u,u_{n+1}\right) +\rho \left( u_{n+1},u_{n+2}\right) +\ldots +\left(
1+\gamma \right) \rho \left( u_{n+v-1},u_{n+v}\right) \right] .
\end{equation*}

Using inequality (\ref{m3}), we have $\rho \left( u,Su\right) =0$, i.e., $%
u\in F\left( S\right) $. Now we need to show that $u$ is a unique fixed
point. Suppose to the contrary that there exist a different fixed point $w$.
Since $S$ is a Kannan type mapping, we have%
\begin{equation*}
\rho \left( u,w\right) =\rho \left( Su,Sw\right) \leq \gamma \left[ \rho
\left( u,Su\right) +\rho \left( w,Sw\right) \right] =0.
\end{equation*}%
Therefore $u=w$ which is a contradiction. So, $u$ is a unique fixed point of 
$S$.
\end{proof}

\begin{remark}
In Theorem \ref{B},

\begin{enumerate}
\item if $v=s=1$, then we obtain Kannan fixed point theorem \cite{kan} in
complete metric spaces.

\item if $v=2$, then we derive Theorem 2.4 of \ \cite{geo}.

\item if $v=2$ and $s=1$, then we obtain main theorem of \cite{das} without
the assumption of orbitally completeness of the space and the main theorem
of \cite{ak}.
\end{enumerate}
\end{remark}

\end{document}